\newtheorem{theorem}{Theorem}
\newtheorem{lemma}[theorem]{Lemma}
\title{Hamiltonicity of Schrijver graphs \\ and stable Kneser graphs}
\author{Torsten M\"utze}
\address[Torsten M\"utze]{Department of Computer Science, University of Warwick, United Kingdom \& Department of Theoretical Computer Science and Mathematical Logic, Charles University, Prague, Czech Republic}
\email{torsten.mutze@warwick.ac.uk}
\author{Namrata}
\address[Namrata]{Department of Computer Science, University of Warwick, United Kingdom}
\email{namrata@warwick.ac.uk}
\thanks{This work was supported by Czech Science Foundation grant GA~22-15272S. Both authors participated in the workshop `Combinatorics, Algorithms and Geometry' in March 2024, which was funded by German Science Foundation grant~522790373.}
\begin{document}

\begin{abstract}
For integers~$k\geq 1$ and~$n\geq 2k+1$, the Schrijver graph~$S(n,k)$ has as vertices all $k$-element subsets of~$[n]:=\{1,2,\ldots,n\}$ that contain no two cyclically adjacent elements, and an edge between any two disjoint sets.
More generally, for integers~$k\geq 1$, $s\geq 2$, and $n \geq sk+1$, the $s$-stable Kneser graph~$S(n,k,s)$ has as vertices all $k$-element subsets of~$[n]$ in which any two elements are in cyclical distance at least~$s$.
We prove that all the graphs~$S(n,k,s)$, in particular Schrijver graphs~$S(n,k)=S(n,k,2)$, admit a Hamilton cycle that can be computed in time~$\cO(n)$ per generated vertex.
\end{abstract}

\maketitle

\section{Introduction}
\label{sec:schrijver}

For integers~$k\geq 1$ and~$n\geq 2k+1$, the \defi{Kneser graph~$K(n,k)$} has as vertices all $k$-element subsets of~$[n]:=\{1,2,\ldots,n\}$, and an edge between any two sets~$A$ and~$B$ that are disjoint, i.e., $A\cap B=\emptyset$.
Kneser graphs were popularized by Lov\'asz~\cite{MR514625} in his celebrated proof of Kneser's conjecture, where he proved that the chromatic number of~$K(n,k)$ equals~$n-2(k-1)$, using topological arguments.
Schrijver~\cite{MR512648} considered an induced subgraph of~$K(n,k)$, obtained by considering as vertices only the $k$-element sets that contain no two cyclically adjacent numbers, i.e., for any vertex~$A\seq [n]$ and any two elements~$i,j\in A$ with $i<j$, we require that $j-i\geq 2$ and~$j-i\neq n-1$.
This graph is referred as \defi{Schrijver graph}~$S(n,k)$, and Schrijver showed that it is vertex-critical, i.e., it has the same chromatic number as the Kneser graph~$K(n,k)$, but removing any of its vertices will decrease the chromatic number.
Both Kneser and Schrijver graphs are generalized by~\defi{$s$-stable Kneser graphs~$S(n,k,s)$}.
Those are defined for integers~$k\geq 1$, $s\geq 1$, and $n \geq \max\{2,s\}k+1$, as the induced subgraph of~$K(n,k)$ obtained by considering as vertices only the $k$-element sets that contain no two numbers in cyclical distance less than~$s$, i.e., for any vertex~$A\seq [n]$ and any two elements~$i,j\in A$ with $i<j$, we require that $j-i\geq s$ and~$j-i\notin \{n-1,n-2,\ldots,n-s+1\}$.
Note that $S(n,k,1)$ is precisely the Kneser graph~$K(n,k)$, whereas $S(n,k,2)$ is precisely the Schrijver graph~$S(n,k)$.
Meunier~\cite[Conj.~2]{MR2793613} conjectured that the chromatic number of~$S(n,k,s)$ equals~$n-s(k-1)$ when $s\geq 2$, and some progress in this direction has been made~\cite{jonsson:2012,MR3346141}.

In this paper, we consider the problem of finding Hamilton cycles in Schrijver graphs, and more generally, in $s$-stable Kneser graphs.
For Kneser graphs~$K(n,k)$, this problem was recently settled affirmatively by Merino, M\"utze and Namrata~\cite{MR4617441}.
Specifically, we showed that all Kneser graphs~$K(n,k)$, $n\geq 2k+1$, admit a Hamilton cycle, unless $(n,k)=(5,2)$, which is the exceptional Petersen graph.
This result followed a long line of work (see, e.g.~\cite{MR1883565,MR1999733,MR2836824,MR4273468}), and it resolved a problem that was open for 50~years.

In this work, we apply the methods developed in~\cite{MR4617441} to Schrijver graphs and $s$-stable Kneser graphs, which yields the following new results.

\begin{theorem}
\label{thm:ham}
For any $k\geq 1$, $s\geq 2$ and $n\geq sk+1$, the $s$-stable Schrijver graph~$S(n,k,s)$ has a Hamilton cycle.
In particular, the Schrijver graph~$S(n,k)=S(n,k,2)$ has a Hamilton cycle.
\end{theorem}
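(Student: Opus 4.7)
The plan is to mimic the two-stage proof strategy of \cite{MR4617441} for Kneser graphs: first exhibit a spanning 2-factor of $S(n,k,s)$ coming from some natural cyclic or recursive structure, and then merge its cycles pairwise into a single Hamilton cycle by a sequence of local \emph{gluing moves} at suitable 4-cycles.

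For the base case $n=sk+1$, the graph has only $sk+1$ vertices, namely the cyclic shifts of $\{1,s+1,2s+1,\ldots,(k-1)s+1\}$, and it is itself a single cycle of length $sk+1$, so Hamiltonicity is immediate. For $n>sk+1$, the first step is to construct a spanning 2-factor. A natural candidate exploits the $\mathbb{Z}_n$-action by cyclic rotations, whose orbits can be combined with one further edge per vertex to form cycles indexed by some combinatorial parameter, exactly as in the Kneser case.

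In the second step, one locates, for each pair of 2-factor cycles that one wishes to merge, a 4-cycle of $S(n,k,s)$ whose alternate edges lie in the two cycles; toggling these edges fuses the two cycles into one, and iterating this operation collapses the 2-factor into a single Hamilton cycle. Connectivity of the auxiliary `merging' graph on cycles is then the key combinatorial statement to verify.

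The main obstacle will be the $s$-stability constraint. In the Kneser case one may freely replace an element of a vertex by any other element in $[n]$, but here the replacement must maintain pairwise cyclic distance at least $s$ to all remaining elements, which rules out many of the natural gluing 4-cycles. I would circumvent this by (i) defining the 2-factor directly so that each of its cycles consists only of $s$-stable sets, for instance by exploiting a fixed-point-free automorphism of $S(n,k,s)$; (ii) choosing the 4-cycles used in gluing so that the two `intermediate' vertices are also $s$-stable, possibly by taking slightly longer detours than in \cite{MR4617441}; and (iii) treating by hand a handful of small cases where $n$ is only a little larger than $sk+1$, since both the 2-factor and the gluing scheme may degenerate in that regime. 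The efficiency claim of $\cO(n)$ per generated vertex should then follow from the same recursive unfolding as in \cite{MR4617441}.
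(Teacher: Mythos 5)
Your high-level plan coincides with the paper's strategy (cycle factor from the cyclic group action, then gluing via 4-cycles, with connectivity of an auxiliary graph as the key step), but as written it is a proof outline rather than a proof: every place where real work is needed is left as a declaration of intent. Concretely, three things are missing. First, the cycle factor is never actually constructed. In fact no ``one further edge per vertex'' is needed: for $s\geq 2$, every $1$ in an $s$-stable set $x$ is followed cyclically by at least one $0$, so $x$ and its cyclic shift $\sigma(x)$ are disjoint, hence adjacent, and the $\sigma$-orbits themselves, traversed in shift order, already form a cycle factor. You neither state nor verify this adjacency, which is the one observation that makes the whole construction work (and which fails for $s=1$). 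Second, the gluing 4-cycles are never specified. The paper takes pairs $(x,y)$ where $x$ and $\sigma^{-1}(y)$ differ by pushing a single block-terminal glider $10^{s-1}$ one position to the right past an unmatched bit, and checks that $(x,\sigma(x),\sigma(y),y)$ is a 4-cycle; all four vertices are automatically $s$-stable, so your anticipated ``main obstacle'' (intermediate vertices violating stability, requiring detours or a fixed-point-free automorphism) does not arise --- but you have not shown that either your workaround or the direct route succeeds. Third, and most importantly, you explicitly identify connectivity of the auxiliary merging graph as ``the key combinatorial statement to verify'' and then do not verify it. This is where the paper's actual content lies: one defines a lexicographic potential $P(x)$ on the block/gap length sequence read from the gluing position and exhibits, for every non-minimal cycle, a connector that strictly decreases this potential, terminating at the cycle of $\hyph^{\,n-sk}(10^{s-1})^k$. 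Without an argument of this kind the proof does not close.

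A smaller point: your base case $n=sk+1$ is correct (the graph has exactly $n$ vertices, all cyclic shifts of one set, and the single $\sigma$-orbit is a Hamilton cycle), but it needs no separate treatment --- it is the degenerate instance of the general construction in which the cycle factor has one cycle and no gluing is required.
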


We also provide an efficient algorithm for computing these cycles.

\begin{theorem}
\label{thm:algo}
Let $k\geq 1$, $s\geq 2$ and $n\geq sk+1$.
There is an algorithm for computing a Hamilton cycle in~$S(n,k,s)$ that takes time~$\cO(n)$ to compute the next vertex on the cycle.
\end{theorem}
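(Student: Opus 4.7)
The plan is to verify that the Hamilton cycle built in the proof of Theorem~\ref{thm:ham} admits an \emph{online} traversal: given the current vertex $A$, one can recover its successor on the cycle from $A$ alone in time $\cO(n)$, without maintaining any external state beyond~$A$ itself.

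First, I would revisit the construction of the cycle in Theorem~\ref{thm:ham} and express it as a decomposition of $V(S(n,k,s))$ into a bounded number of combinatorial \emph{block types}, each block being traversed by a simple local rule, with consecutive blocks joined by prescribed gluing edges coming from the auxiliary structures inherited from~\cite{MR4617441}. For each block type~$i$ I would write down three items: a predicate $P_i(A)$ that recognizes whether $A$ belongs to a block of type~$i$; a successor rule $\sigma_i(A)$ producing the next vertex inside the block; and a transition rule that applies when $A$ is the last vertex of its block and the cycle crosses into the next block.

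Second, I would argue that, representing $A$ by its sorted list of $k$~elements in~$[n]$, each predicate $P_i$ and each successor rule $\sigma_i$ can be evaluated in $\cO(n)$ time. The predicates reduce to linear scans along the cyclic order~$[n]$ that detect the relevant patterns (maximal runs of gaps of length $s$, distinguished pivot positions, parities of suitable counters), and the successor rules modify $A$ by a bounded number of shifts, insertions, or deletions of elements, all of which fit in $\cO(n)$ on a sorted-list representation. The transition rules are single prescribed edges from the gluing lemma, again applied in $\cO(n)$.

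The main obstacle, as in the Kneser case treated in~\cite{MR4617441}, is \emph{localization}: showing that the blocks form a partition of $V(S(n,k,s))$ whose classes are cut out by mutually exclusive, locally checkable predicates on~$A$, so that no global index has to be remembered. In the Kneser setting this rested on canonical parenthesis-matching-type invariants assigned to each vertex; here the $s$-stability constraint only forbids certain patterns, so the same invariants remain well-defined and disjoint, but each predicate must be re-derived with the sparsity condition in mind and verified to still be exhaustive. Once this is in place, the algorithm simply loops through the $\cO(1)$ block types, tests each $P_i$ on the current vertex, and applies the corresponding $\sigma_i$ or transition rule, yielding the claimed $\cO(n)$ per-vertex bound.
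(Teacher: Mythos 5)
Your high-level picture of the traversal (long stretches of factor cycles followed by a local shift rule, with occasional prescribed transition edges) matches what actually happens, but the proposal has a genuine gap at exactly the point where the work lies. The existence proof of Theorem~\ref{thm:ham} only takes $\cT$ to be \emph{some} minimal subset of~$\cU_p$ making $\cH_{n,k}[\cT]$ connected; that choice is not canonical, so it gives you no way to decide, from the current vertex alone, whether a connector incident to it is one of the ``prescribed'' gluing edges. Your proposal treats the transition edges as already given and never says how membership in the gluing set is tested --- yet this is the crux: one must replace the arbitrary minimal set by an \emph{explicit} spanning tree whose edge set admits an $\cO(n)$ membership test. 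The paper does this in~\eqref{eq:Tp} by selecting on each factor cycle the unique connectable vertex~$x$ whose block/gap profile $P(x)$ equals the lexicographic minimum $L(\neck{x})$ over all admissible rotations, proves in Lemma~\ref{lem:HT} that this choice really yields a spanning tree (connectivity via a lexicographically decreasing potential, acyclicity via out-degree~$1$), and tests the defining condition with Booth's linear-time least-rotation algorithm. Nothing in your sketch of ``predicates $P_i$'' and ``parities of suitable counters'' supplies this canonical choice or its $\cO(n)$ verification, and without it the algorithm cannot recognize where to leave one factor cycle for another.

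A second, smaller issue: you insist on a successor rule that uses ``no external state beyond $A$ itself.'' That is stronger than what the theorem asks for and is likely unattainable: each time the Hamilton cycle crosses a gluing 4-cycle $C_4(x',y')$ the direction of travel along the factor cycles flips (from $\sigma$ to $\sigma^{-1}$ or back), so the direction at a given vertex encodes the parity of the number of gluings crossed so far, a global quantity along the cycle. The paper simply carries a direction bit $d\in\{-1,+1\}$ together with the current vertex, which is all that is needed for the $\cO(n)$ time and memory claims. You should drop the statelessness requirement and instead supply the explicit tree and its local membership test.
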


The initialization time and memory required by our algorithm is also~$\cO(n)$.
The assumption $s\geq 2$ in Theorem~\ref{thm:algo} is important.
In fact, we do not know any algorithm with running time polynomial in~$n$ and~$k$ for computing a Hamilton cycle in~$S(n,k,1)=K(n,k)$, even though by the aforementioned results~\cite{MR4617441}, $K(n,k)$ is known to have a Hamilton cycle (unless $(n,k)=(5,2)$).

We implemented the algorithm given by Theorem~\ref{thm:algo} in~C++, and made it available for experimentation and download on the Combinatorial Object Server website~\cite{cos_schrijver}.

After submitting this manuscript, we learnt that Theorem~\ref{thm:ham} was proved independently by Ledezma and Pastine~\cite{ledezma-pastine:24}.

\subsection{Outline of this paper}

In Sections~\ref{sec:proof-thm} and~\ref{sec:proof-algo} we present the proofs of Theorem~\ref{thm:ham} and~\ref{thm:algo}, respectively, for the special case of Schrijver graphs~$S(n,k)=S(n,k,2)$.
With minimal adaptations, these proofs can be adjusted to also work for the general case of $s$-stable Kneser graphs for any $s\geq 2$.
We explain those adaptations in Section~\ref{sec:general}.

\section{Proof of Theorem~\ref{thm:ham} for Schrijver graphs~$S(n,k)$}
\label{sec:proof-thm}

As mentioned before, we apply the methods developed in~\cite{MR4617441} to construct a Hamilton cycle in the Schrijver graph~$S(n,k)$.
The construction proceeds in two steps:
In the first step we construct a \defi{cycle factor} in the graph, i.e., a collection of disjoint cycles that together visit all vertices.
In the second step we connect the cycles of the factor to a single Hamilton cycle.

\subsection{Cycle factor construction and gliders}

\subsubsection{Preliminaries}

We interpret vertices of the Schrijver graph~$S(n,k)$ as bitstrings by considering the corresponding characteristic vectors.
For every $k$-element subset~$A$ of~$[n]$, this is a bitstring of length~$n$ with exactly $k$ many 1s at the positions corresponding to the elements of~$A$.
For example, the vertex~$A=\{2,7,9\}$ of $S(9,3)$ is represented by the bitstring~$x=010000101$.
Throughout this paper, we consider the indices into these bitstrings modulo~$n$, i.e., $x_{n+i}=x_i$.
By definition of Schrijver graphs, no two 1s in $x$ are next to each other (considered cyclically), or in other words, every 1-bit is surrounded by 0-bits.
We write~$X_{n,k}$ for the set of all bitstrings that encode vertices of~$S(n,k)$.
A straightforward counting argument shows that~$|X_{n,k}|=\binom{n-k+1}{k}-\binom{n-k-1}{k-2}=\frac{n}{n-k}\binom{n-k}{k}$ (see~\cite{MR9006}).
For any bitstring~$x$ and any integer~$r\geq 0$, we write $x^r$ for the $r$-fold concatenation of~$x$ with itself.

For two strings~$x$ and~$y$ over any alphabet, we write $x\prec y$ if $x$ is lexicographically strictly smaller than~$y$, and for a set of strings~$X$ we write $\lexmin X$ for the lexicographically least string in~$X$, i.e., for the unique string~$x\in X$ that satisfies $x\prec y$ for all $y\in X\setminus\{x\}$.

\subsubsection{Cycle factor construction}
\label{sec:factor}

For a bitstring~$x\in X_{n,k}$, we write $\sigma(x)$ for the string obtained from~$x$ by a cyclic right shift by 1~position.
We define the \defi{necklace of~$x$} by
\begin{equation*}
\neck{x} := \{\sigma^i(x)\mid i\geq 0\},
\end{equation*}
i.e., the necklace~$\neck{x}$ is the set of all cyclic shifts of~$x$.

As $x\in X_{n,k}$ has no two 1s next to each other, $(x,\sigma(x))$ is an edge in the Schrijver graph.
Consequently, for a bitstring~$x$ and~$r:=|\neck{x}|$, the sequence
\begin{subequations}
\label{eq:Cnk}
\begin{equation}
\label{eq:Cx}
C(x):=\big(x,\sigma(x),\sigma^2(x),...\sigma^{r-1}(x)\big)
\end{equation}
defines a cycle in~$S(n,k)$; see the top part of Figure~\ref{fig:glider}.
We thus obtain a cycle factor in~$S(n,k)$ by
\begin{equation}
\label{eq:cyc-fac}
\cC_{n,k}:=\{C(x) \mid x\in X_{n,k}\}.
\end{equation}
\end{subequations}

\subsubsection{Matched and unmatched bits}
\label{sec:matching}

The operation~$x\mapsto \sigma(x)$ can also be described as follows:
To create the string~$\sigma(x)$ from~$x$, every 1-bit at some position~$i$ in~$x$ is transposed with the 0-bit immediately to the right of it, i.e., at position~$i+1$.
Given $x\in X_{n,k}$, we refer to the bits at positions $\mu(x):=\{i\mid x_i=1 \text{ or } x_{i-1}=1\}\seq [n]$ as \defi{matched}, and to the bits at positions~$\ol{\mu}(x):=[n]\setminus \mu(x)=\{i\mid x_i=0 \text{ and } x_{i-1}=0\}$ as \defi{unmatched}.
The motivation for this terminology will become clear momentarily.
We denote unmatched bits in~$x$ as~$\hyph$.
For example, we write $x=010001001010101=010\hyph\hyph10\hyph 1010101$ and we have $\mu(x)=\{1,2,3,6,7,9,10,11,12,13,14,15\}$ and $\ol{\mu}(x)=\{4,5,8\}$.
Note that $|\mu(x)|=2k$ and therefore~$|\ol{\mu}(x)|=n-2k$, so $x$ has exactly $n-2k$ many~$\hyph$s.
On the right hand side of Figure~\ref{fig:glider}, matched bits are colored, and unmatched bits are white.

We refer to any maximal (cyclic) substring of matched bits in~$x\in X_{n,k}$ as a \defi{block} and to any maximal (cyclic) substring of unmatched bits as a \defi{gap}.
The \defi{length} of a block or gap is the number of bits belonging to the block or gap, respectively.
Clearly, the length of a block is always even.
For example, the bitstring~$x$ from before has two blocks of lengths~2 and~10, and two gaps of lengths~2 and~1.

\subsubsection{Gliders}

We consider every substring~$10$ in~$x$ as an entity called a \defi{glider}.
Specifically, the set of all gliders in~$x$ is the set
\begin{equation}
\label{eq:Gammax}
\Gamma(x):=\big\{(i,i+1)\mid x_i=1 \text{ and } x_{i+1}=0 \big\}.
\end{equation}
For example, for $x=010\hyph\hyph10\hyph 1010101$ from before we have~$\Gamma(x)=\{(2,3),(6,7),(9,10),(11,12),\allowbreak (13,14),(15,1)\}$.

\begin{figure}
\includegraphics{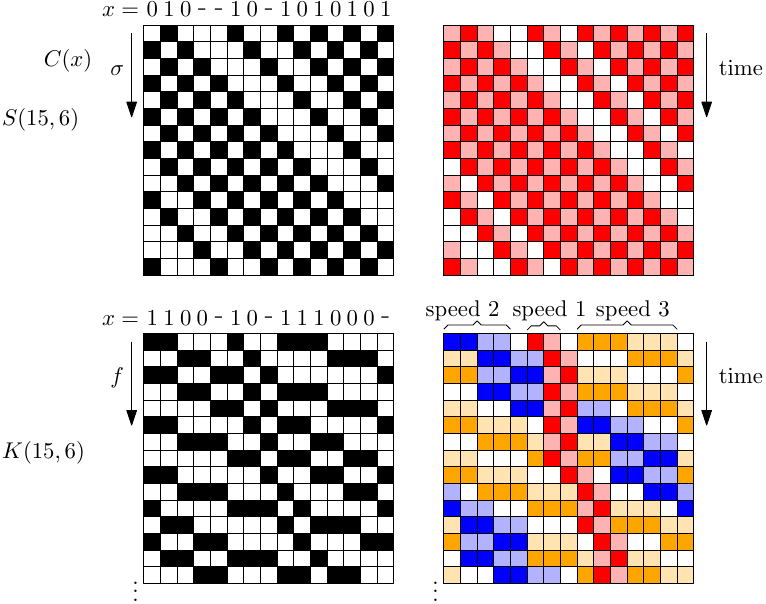}
\caption{A cycle in the Schrijver graph~$S(15,6)$ constructed from the mapping~$\sigma$ (top), and a cycle in the Kneser graph~$K(15,6)$ constructed from the more general mapping~$f$.
The cycle in~$S(15,6)$ is shown completely, whereas only the first 15 vertices of the cycle in~$K(15,6)$ are displayed.
On the left, each vertex is represented by a bitstring, with 1-bits colored black and 0-bits colored white.
The vertices are printed from top to bottom in the order of the cycle.
The right-hand side shows the interpretation of certain groups of bits as gliders, and their movement over time.
Matched bits belonging to the same glider are given the same color, with the opaque filling given to 1-bits, and the transparent filling given to 0-bits.
}
\label{fig:glider}
\end{figure}

The motivation for the word `glider' becomes clear when comparing the gliders in~$x$ with those in~$\sigma(x)$, and when interpreting one application of~$\sigma$ (i.e., moving one step along the cycle~$C(x)$) as one unit of time moving forward.
Indeed, if $(i,i+1)\in \Gamma(x)$, then we have $(i+1,i+2)\in \Gamma(\sigma(x))$, i.e., a glider that occupies positions~$i$ and~$i+1$ in~$x$ moves to positions~$i+1$ and~$i+2$ after one time step; see Figure~\ref{fig:glider}.
We can thus view these pairs of matched bits as entities that change position over time, namely they move one position to the right with each time step, i.e., their `speed' is~1.

\subsubsection{Connection to parenthesis matching in Kneser graphs}

The aforementioned mapping~$x\mapsto \sigma(x)$ can be generalized to yield a more general mapping~$x\mapsto f(x)$ that recovers precisely the construction of a cycle factor in~$K(n,k)$ used in~\cite{MR4617441}, which agrees with the one before on the Schrijver subgraph~$S(n,k)\seq K(n,k)$, as follows:
To create the string~$f(x)$ from~$x$, where $x$ now is an arbitrary bitstring of length~$n$ with $k$ many 1s, every 1-bit at some position~$i$ in~$x$ is transposed with the 0-bit that terminates the shortest possible substring at positions~$i+1,i+2,\ldots$ that has strictly more~0s than~1s; see the bottom part of Figure~\ref{fig:glider}.
An alternative and equivalent definition of~$f$ is to interpret the 1s in~$x$ as opening brackets and the 0s as closing brackets, to match pairs of opening and closing brackets in the natural way (leaving $n-2k$ closing brackets unmatched), and to transpose every bit with the corresponding matched partner to obtain~$f(x)$.
This idea, pioneered by Greene and Kleitman~\cite{MR0389608} in another context, is often referred to as \defi{parenthesis matching}, which explains our choice of terminology for matched and unmatched bits before.
In fact, the paper~\cite{MR4617441} describes a more general partition of groups of matched bits into gliders that works in general Kneser graphs, and then different gliders may move at different speeds; see the bottom part of Figure~\ref{fig:glider}.
For our discussion of Schrijver graphs this more general definition of gliders is not needed.

\subsection{Gluing the cycles together}
\label{sec:gluing}

We now show how to connect the cycles in the factor~$\cC_{n,k}$ defined in~\eqref{eq:Cnk} together to a single Hamilton cycle of~$S(n,k)$.
Each joining step proceeds by gluing together two cycles from the factor to one cycle via a 4-cycle.

\subsubsection{Connectors}
\label{sec:connectors}

Given two bitstrings~$x,y\in X_{n,k}$, we refer to the pair~$(x,y)$ as a \defi{connector}, if $x$ and~$\sigma^{-1}(y)$ agree in all but three positions~$i,i+1,i+2$ such that $x_{i,i+1,i+2}=10\hyph$ and $\sigma^{-1}(y)_{i,i+1,i+2}=\hyph 10$, i.e., we have
\begin{equation}
\label{eq:conn}
\begin{bmatrix}
x \\ \sigma^{-1}(y)
\end{bmatrix}
=
\begin{bmatrix}
\cdots1\,0 \, \hyph \cdots\\
\cdots\hyph\, 1\,0\cdots
\end{bmatrix},
\end{equation}
where the vertically aligned $\cdots$ stand for substrings in which~$x$ and~$\sigma^{-1}(y)$ agree.
In other words, $x$ and~$\sigma^{-1}(y)$ have the same gliders, except that the rightmost glider in a block of~$x$ is `pushed' to the right by one position, i.e., it is transposed with the unmatched bit to its right.

We write~$\cX_{n,k}$ for the set of all connectors.
Note that a bitstring~$z\in\cX_{n,k}$ may be part of several different connectors.
Specifically, if there are~$p$ gaps in~$z$, then $z$ is contained in exactly~$p$ connectors of the form~$(z,y)$ and $p$ connectors of the form~$(x,z)$.

\begin{lemma}
\label{lem:connect}
For any connector~$(x,y)\in \cX_{n,k}$, the sequence $C_4(x,y):=(x,\sigma(x),\sigma(y),y)$ is a 4-cycle in the Schrijver graph~$S(n,k)$.
\end{lemma}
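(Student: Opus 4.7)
The plan is to verify the four properties required for $(x,\sigma(x),\sigma(y),y)$ to form a 4-cycle in $S(n,k)$: that each entry lies in $X_{n,k}$, that consecutive entries are joined by an edge (i.e., correspond to disjoint $k$-sets), and that the four entries are pairwise distinct.

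First I would translate the connector condition into information about the underlying 1-sets. Setting $z:=\sigma^{-1}(y)$ and letting $S\seq[n]$ denote the 1-positions of $x$, condition~\eqref{eq:conn} says $z$ has 1-set $(S\setminus\{i\})\cup\{i+1\}$. Hence, reading indices mod~$n$, the 1-sets of the four bitstrings $x$, $\sigma(x)$, $y$, $\sigma(y)$ are, respectively,
\[
S,\qquad S+1,\qquad \bigl((S+1)\setminus\{i+1\}\bigr)\cup\{i+2\},\qquad \bigl((S+2)\setminus\{i+2\}\bigr)\cup\{i+3\}.
\]
Membership in $X_{n,k}$ is automatic: $x,y\in X_{n,k}$ by assumption, and cyclic shifting preserves the no-two-adjacent-1s property, so $\sigma(x),\sigma(y)\in X_{n,k}$ as well.

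The core of the proof is checking the four edges. Two of them, $(x,\sigma(x))$ and $(y,\sigma(y))$, have the form $(w,\sigma(w))$ with $w\in X_{n,k}$: the corresponding 1-sets are $T$ and $T+1$, which are disjoint precisely by the no-two-adjacent condition on $w$. For the remaining two edges I would exploit that $\sigma$ preserves edges of $K(n,k)$ (shifting both endpoints of a disjoint pair preserves disjointness), which reduces $(\sigma(x),\sigma(y))$ to the edge $(x,y)$. Disjointness of the 1-sets of $x$ and $y$ then boils down to
\[
S\cap\Bigl[\bigl((S+1)\setminus\{i+1\}\bigr)\cup\{i+2\}\Bigr]\seq\bigl(S\cap(S+1)\bigr)\cup\bigl(S\cap\{i+2\}\bigr),
\]
and both summands on the right are empty: the first because $x$ has no two cyclically adjacent 1s, and the second because $x_{i+2}=0$.

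The remaining loose end is distinctness of the four vertices, which I would handle by reading off the bits at positions $i,i+1,i+2,i+3$: the 1-sets give the patterns $(1,0,0,\ast)$, $(0,1,0,0)$, $(0,0,1,0)$, $(\ast,0,0,1)$ for $x,\sigma(x),y,\sigma(y)$, so five of the six pairs are separated immediately. The pair I expect to be slightly subtle is $\{x,\sigma(y)\}$, which agrees on the two middle positions; here I would use the 1 of $\sigma(y)$ at position $i+3$ against $x_{i+3}$, combined with the symmetric comparison at position $i$ (where $x$ has a 1 while $\sigma(y)_i=x_{i-2}$), and the regime $n\ge 2k+1$ to rule out the global coincidence $S=(S+2)\setminus\{i+2\}\cup\{i+3\}$. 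This is the only spot where routine bookkeeping needs a genuine argument; the adjacency verification is the main content and is short.
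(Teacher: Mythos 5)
Your verification of the four adjacencies is exactly the paper's argument: $(x,\sigma(x))$ and $(y,\sigma(y))$ come for free from the cycle factor construction, $(\sigma(x),\sigma(y))$ reduces to $(x,y)$ by shift\mbox{-}invariance of disjointness, and $(x,y)$ is an edge because the $1$-set of~$y$ is contained in $(S+1)\cup\{i+2\}$, which is disjoint from~$S$ since $x$ has no two cyclically adjacent $1$s and $x_{i+2}=0$. That part is correct and is the entire content of the paper's proof (the paper phrases it as ``$y$ agrees with $\sigma(x)$ except that its extra $1$ sits at position $i+2$, where $x$ has a $0$'').

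Where you go beyond the paper is distinctness, and there your sketch does not close. The pair $\{x,\sigma(y)\}$ cannot be ruled out ``using the regime $n\geq 2k+1$'': summing the $1$-positions modulo~$n$ gives $\sum\bigl(\text{$1$-set of }\sigma(y)\bigr)\equiv \sum S+2k+1 \pmod{n}$, so $x=\sigma(y)$ forces $n\mid 2k+1$, i.e.\ $n=2k+1$ --- and in that case the coincidence genuinely occurs. For example in $S(5,2)$ the pair $x=10100$, $y=01001$ is a connector with $\sigma(y)=x$, so $C_4(x,y)$ degenerates; similarly $x=100$, $y=001$ in $S(3,1)$. Thus the lemma as literally stated (and your proof of it) fails exactly at $n=2k+1$, while for $n\geq 2k+2$ the sum argument settles the one nontrivial pair cleanly. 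The borderline case is harmless for the main theorem, since for $n=2k+1$ the factor $\cC_{n,k}$ is already a single cycle and no connector is ever used in the gluing; the paper silently skips distinctness altogether. Your instinct to check it was sound --- you just need the correct tool (the sum invariant, or an explicit exclusion of $n=2k+1$) rather than the bound $n\geq 2k+1$, which is not sufficient.
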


\begin{proof}
We already know that $(x,\sigma(x))$ and~$(y,\sigma(y))$ are edges in~$S(n,k)$.
Therefore, it suffices to prove that~$(x,y)$ is an edge in~$S(n,k)$, as this will imply that~$(\sigma(x),\sigma(y))$ is also an edge.
Let $i,i+1,i+2$ be such that~$x_{i,i+1,i+2}=10\hyph$ and~$\sigma^{-1}(y)_{i,i+1,i+2}=\hyph 10$.
From~\eqref{eq:conn} we see that~$y$ agrees with $\sigma(x)$ in all positions but~$i+1,i+2,i+3$, specifically that $\sigma(x)_{i+1,i+2,i+3}=10\hyph$ and $y_{i+1,i+2,i+3}=\hyph 10$.
As $\sigma(x)$ and~$x$ have no 1s at the same position, and~$x_{i+2}=\hyph$ (this is the position where $y$ has a~1), we obtain that~$y$ and~$x$ have no 1s at the same position, i.e., $(x,y)$ is indeed an edge in~$S(n,k)$.
\end{proof}

Observe that if $(x,y)\in \cX_{n,k}$ and $C(x)$ and~$C(y)$ are two distinct cycles in the factor~$\cC_{n,k}$ defined in~\eqref{eq:Cnk}, then the symmetric difference of the edge sets~$\big(C(x) \cup  C(y)\big)\Delta C_4(x,y)$ is a single cycle in~$S(n,k)$ on the same vertex set as~$ C(x) \cup C(y)$, i.e., the 4-cycle `glues' two cycles from the factor together to a single cycle.

We define
\begin{equation}
\label{eq:Ynk}
Y_{n,k}:=\{x\in X_{n,k}\mid x_1=\hyph \text{ and } x_n=0\}=\{x\in X_{n,k}\mid 1\in \ol{\mu}(x) \text{ and } n\in \mu(x)\}.
\end{equation}
These are vertices of~$S(n,k)$ in which the first bit is unmatched and the last bit is matched (specifically, a matched 0-bit).
Any $x\in Y_{n,k}$ has the form
\begin{subequations}
\label{eq:Lxneck}
\begin{equation}
\label{eq:x}
x=\hyph^{\gamma_1}(10)^{\beta_1/2}\;\hyph^{\gamma_2}(10)^{\beta_2/2}\;\cdots \;\hyph^{\gamma_r}(10)^{\beta_r/2}
\end{equation}
for integers $r\geq 1$, $\gamma_1,\ldots,\gamma_r>0$, and even $\beta_1,\ldots,\beta_r>0$.
In words, $\gamma_i$ is the length of the $i$th gap in~$x$, and $\beta_i$ is the length of the $i$th block in~$x$.
We then define
\begin{equation}
\label{eq:Lx}
L(x):=(-\beta_r,\gamma_r,-\beta_{r-1},\gamma_{r-1},\ldots,-\beta_2,\gamma_2,-\beta_1,\gamma_1),
\end{equation}
i.e., $L(x)$ records the lengths of blocks and gaps in~$x$ from \emph{right to left}, and the lengths of blocks are given a negative sign.
For example, for the bitstring~$x=\hyph \hyph \hyph \hyph 10101010 \hyph \hyph \hyph 10 \hyph 1010 \hyph \hyph 101010\in Y_{30,10}$ we have $L(x)=(-6,2,-4,1,-2,3,-8,4)$.

For any $x\in X_{n,k}$ we define $L(\neck{x})$ as the lexicographically smallest string~$L(x')$ over all $x'\in \neck{x}\cap Y_{n,k}$, i.e.,
\begin{equation}
L(\neck{x}):=\lexmin \big\{L(x') \mid x'\in \neck{x}\cap Y_{n,k}\big\}.
\end{equation}
\end{subequations}
In words, we consider all cyclic shifts~$x'$ of~$x$ that belong to the set~$Y_{n,k}$, and take the lexicographically least string~$L(x')$ among them.
Clearly, cyclic shifts of~$x\in Y_{n,k}$ yield different cyclic shifts of~$L(x)$, all starting with a negative number.
In the example from before, we obtain~$L(\neck{x})=(-8,4,-6,2,-4,1,-2,3)$.
Informally, for a given~$x\in X_{n,k}$, the definition~$L(\neck{x})$ locates the (cyclically) longest possible block in~$x$, preceded by the shortest possible gap, preceded by the longest possible block, preceded by the shortest possible gap, etc., which motivates the negative signs in~\eqref{eq:Lx} and the right-to-left reading.

We will also need the following variant of the definition~\eqref{eq:Lx} from before.
For any $x\in X_{n,k}$ and any position~$p\in[n]$ with $x_{p,p+1}=0\hyph$, i.e., the $p$th bit of~$x$ is matched and the $(p+1)$st bit is unmatched, we have $\sigma^{-p}(x)\in Y_{n,k}$ and may thus define
\begin{equation}
\label{eq:Lxp}
L(x,p):=L(\sigma^{-p}(x)).
\end{equation}
In words, this is the sequence of lengths of blocks and gaps in~$x$ starting from position~$p$ read in the right-to-left direction (with negative signs for block lengths).

\subsubsection{Auxiliary graph}

Our strategy is to join the cycles of the factor~$\cC_{n,k}$ by repeatedly gluing pairs of them together via connectors, as described before.
For any set of connectors~$\cU\seq\cX_{n,k}$ we define a graph~$\cH_{n,k}[\cU]$ as follows:
The nodes of~$\cH_{n,k}[\cU]$ are the cycles of the factor~$\cC_{n,k}$.
Furthermore, for any connector~$(x,y)\in\cU$ for which $C(x)$ and~$C(y)$ are distinct cycles, we add an edge that connects~$C(x)$ and~$C(y)$ to the graph~$\cH_{n,k}[\cU]$.
To obtain a Hamilton cycle in~$S(n,k)$, we require two things:
The graph~$\cH_{n,k}[\cU]$ must be connected.
Furthermore, we require that any two of the 4-cycles $C_4(x,y)$ and~$C_4(x',y')$ with $(x,y),(x',y')\in \cU$ used for the joining are edge-disjoint.
To ensure the second property, it is enough to guarantee that~$C_4(x,y)$ and~$C_4(x',y')$ do not have an edge in common on one of the cycles of the factor~$\cC_{n,k}$.
This condition is equivalent to~$\{x,y\}\cap \{x',y'\}=\emptyset$, and if it holds then we call the connectors~$(x,y)$ and~$(x',y')$ \defi{disjoint}.
We therefore require a set of pairwise disjoint connectors~$\cU\seq\cX_{n,k}$ such that~$\cH_{n,k}[\cU]$ is a connected graph.
Note the following subtlety:
There may be two connectors~$(x,y),(x',y')\in\cU$ with $x'=\sigma(x)$ and~$y'=\sigma(y)$, which by our definition are disjoint, even though the 4-cycles~$C_4(x,y)$ and~$C_4(x',y')$ share the edge~$(x',y')=(\sigma(x),\sigma(y))$.
However, in the joining process, only one of the two 4-cycles will be used, as they connect the same pair of cycles~$C(x)=C(x')$ and~$C(y)=C(y')$.

A set of connectors with the aforementioned properties can be defined explicitly for any fixed integer~$p\in[n]$ by
\begin{equation}
\label{eq:Up}
\cU_p := \big\{(x,y)\in \cX_{n,k}\mid x_{p,p+1,p+2}=10\hyph \text{ and } \sigma^{-1}(y)_{p,p+1,p+2}=\hyph 10\big\};
\end{equation}
recall~\eqref{eq:conn}.
In words, these are connectors in which the `pushing' of a glider occurs at position~$p$.

\begin{lemma}
\label{lem:HU}
For any~$k\geq 1$, $n\geq 2k+1$, and $p\in[n]$, let~$\cU_p\seq\cX_{n,k}$ be the set of connectors defined in~\eqref{eq:Up}.
The connectors in~$\cU_p$ are pairwise disjoint, and $\cH_{n,k}[\cU_p]$ is a connected graph.
\end{lemma}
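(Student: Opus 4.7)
The lemma has two parts, pairwise disjointness of the connectors in~$\cU_p$ and connectivity of~$\cH_{n,k}[\cU_p]$, which I would address in sequence. For disjointness, the plan is to check the four possible ways two distinct connectors $(x,y),(x',y')\in\cU_p$ could share a vertex. Since the push $10\hyph\mapsto\hyph 10$ at positions $p,p+1,p+2$ is a deterministic local operation, $x=x'$ forces $y=y'$ and vice versa. The cases $x=y'$ and $y=x'$ are ruled out by comparing matched/unmatched statuses at position~$p+1$: in~$x$ that bit is a matched~$0$ (because $x_{p,p+1}=10$), whereas in~$y'$ it is unmatched (because $\sigma^{-1}(y')_p=\hyph$ forces $y'_{p+1}=\hyph$).

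For connectivity, my plan is to route every necklace via edges in~$\cH_{n,k}[\cU_p]$ to the canonical necklace $N_0:=\neck{(10)^k\hyph^{n-2k}}$, the unique necklace with one block (of length~$2k$) and one gap (of length~$n-2k$); equivalently, $N_0$ is the unique lex-minimizer of~$L(\neck{\cdot})$. I would induct on the number of blocks~$r(N)$: the base case $r(N)=1$ forces $N=N_0$, and in the inductive step I would exhibit a sequence of edges in~$\cH_{n,k}[\cU_p]$ from~$N$ to some~$N'$ with $r(N')<r(N)$.

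The inductive step rests on a four-case analysis of how a forward push at a block of length~$\beta_j$ followed by a gap of length~$\gamma_j$ modifies the cyclic block/gap sequence, splitting on whether $\beta_j=2$ or $\beta_j>2$ and whether $\gamma_j=1$ or $\gamma_j\geq 2$: case~(i), $\beta_j=2$, $\gamma_j\geq 2$, shifts one unit of gap mass from~$\gamma_j$ to~$\gamma_{j-1}$; case~(ii), $\beta_j>2$, $\gamma_j\geq 2$, splits the block, inserting a new length-$2$ block and a new length-$1$ gap (so $r$ increases by~$1$); case~(iii), $\beta_j=2$, $\gamma_j=1$, merges~$\beta_j$ with~$\beta_{j+1}$ (so $r$ decreases by~$1$); case~(iv), $\beta_j>2$, $\gamma_j=1$, transfers two units of block mass from~$\beta_j$ to~$\beta_{j+1}$. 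Since only case~(iii) reduces~$r$, the strategy is to engineer a $(\beta_j,\gamma_j)=(2,1)$ configuration and then apply~(iii). If~$N$ already has such a configuration, apply~(iii) directly; if~$N$ has a length-$2$ block with a longer gap after, apply~(i) repeatedly to shrink that gap to~$1$; if all blocks have length $\geq 4$, first use~(iv) (when some gap has length~$1$) or the composite~(ii)+(i)+(iii) to create a length-$2$ block.

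The main obstacle is the sub-case where all blocks have length $\geq 4$ and all gaps have length $\geq 2$, where neither~(iii) nor~(iv) applies directly. Here the composite~(ii)+(i)+(iii) operation---apply~(ii) to create a length-$2$ buffer block with a length-$1$ gap before it, drain its successor gap to~$1$ via repeated~(i), then apply~(iii) to merge the buffer with the next block---nets to transferring two units of block mass across a gap of arbitrary length, leaving~$r$ unchanged. Iterating this composite mass-transfer shrinks some block down to length~$2$, returning the necklace to an earlier sub-case. A minor technicality to dispense with is that certain pushes produce self-loops on highly symmetric necklaces, but those are harmless for connectivity of~$\cH_{n,k}[\cU_p]$.
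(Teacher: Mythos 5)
Your proof is correct, and while the disjointness half is essentially the paper's argument (comparing the bit at a fixed position near~$p$ -- you use position~$p$/$p{+}1$, the paper uses~$p{+}2$ -- the content is identical), the connectivity half takes a genuinely different route to the same destination. Both arguments route every necklace to $C(\hyph^{n-2k}(10)^k)$ using the same local moves (pushing the last glider of a block into the following gap), and your four cases (i)--(iv) are exactly the case split hidden inside the paper's formula~\eqref{eq:Ptau} for~$\tau$: your (i) and (ii) are the paper's case $\gamma'>1$, your (iv) is $\gamma'=1,\beta>2$, and your (iii) is $\gamma'=1,\beta=2$. The difference is the termination mechanism. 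The paper defines the potential $P(x)$ in~\eqref{eq:Px} and shows that a \emph{single} push always strictly decreases it lexicographically, so no routing or case strategy is needed -- one just follows any push and the monovariant does the rest. You instead induct on the number of blocks, which forces you to engineer multi-step paths (repeated gap-draining via~(i), the composite (ii)+(i)+(iii) mass transfer) to reach a configuration where the block-merging move~(iii) applies. Your approach is more constructive about \emph{which} sequence of pushes to take and avoids introducing~$P$, at the cost of a longer case analysis and the self-loop technicality you flag (which is indeed harmless, since the merging step~(iii) changes the block count and hence never produces a self-loop); the paper's potential function buys a two-line termination argument and, as a bonus, is reused almost verbatim in Lemma~\ref{lem:HT} to extract an explicit spanning tree for the algorithm -- something your multi-step routing would not directly provide.
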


\begin{proof}
Consider two connectors~$(x,y),(x',y')\in \cU_p$ with $x\neq x'$ which implies that~$y\neq y'$.
Clearly, we have $y\neq x$ and $y'\neq x'$.
From~\eqref{eq:Up} we also see that $x_{p+2}=\hyph$ and $\sigma^{-1}(y')_{p+1}=y'_{p+2}=1$, implying that $x\neq y'$, and symmetrically that $x'_{p+2}=\hyph$ and $\sigma^{-1}(y)_{p+1}=y_{p+2}=1$, implying that $x'\neq y$.
This proves that $\{x,y\}\cap \{x',y'\}=\emptyset$, as desired.

\begin{figure}
\makebox[0cm]{ 
\includegraphics{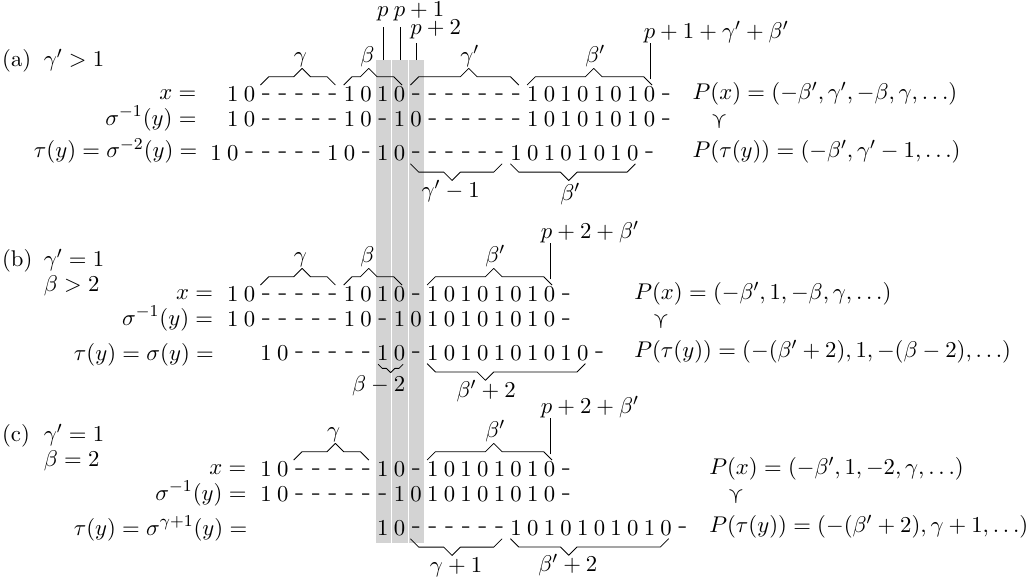}
}
\caption{Illustration of the proof of Lemma~\ref{lem:HU}.}
\label{fig:push}
\end{figure}

It remains to prove that~$\cH_{n,k}[\cU_p]$ is connected.
We say that a vertex~$x\in X_{n,k}$ is \defi{connectable}, if $x_{p,p+1,p+2}=10\hyph$.
By~\eqref{eq:Up}, this means that there is a unique corresponding connector~$(x,y)\in \cU_p$.
For any connectable vertex~$x$ with at least two blocks, we let $\beta$ be the length of the block ending at position~$p+1$ in~$x$, $\gamma$ be the length of the gap to the left of it, $\gamma'$ be the length of the gap to the right of it, and we let $\beta'$ be the length of the block to the right of this gap; see Figure~\ref{fig:push}~(a).
We define
\begin{equation}
\label{eq:Px}
P(x):=L(x,p+1+\gamma'+\beta')
\end{equation}
with $L(x,p)$ as defined in~\eqref{eq:Lxp}.
In words, $P(x)$ is obtained by recording the lengths of blocks and gaps in~$x$ from right to left, starting at the first block strictly to the right of position~$p+1$ in~$x$, where block lengths receive an additional negative sign.
We thus have $P(x)=(-\beta',\gamma',-\beta,\gamma,\ldots)$.
We also define the connectable vertex
\begin{equation}
\label{eq:tau}
\tau(y):=\begin{cases}
\sigma^{-2}(y) & \text{if } \gamma'>1, \\
\sigma(y) & \text{if } \gamma'=1 \text{ and } \beta>2, \\
\sigma^{\gamma+1}(y) & \text{if } \gamma'=1 \text{ and } \beta=2;
\end{cases}
\end{equation}
see Figure~\ref{fig:push}.
It satisfies
\begin{equation}
\label{eq:Ptau}
P(\tau(y))=\begin{cases}
(-\beta',\gamma'-1,\ldots) & \text{if } \gamma'>1, \\
(-(\beta'+2),\ldots) & \text{if } \gamma'=1,
\end{cases}
\end{equation}
implying that $P(\tau(y))\prec P(x)$, i.e., $P(\tau(y))$ is lexicographically smaller than~$P(x)$.
This shows that we can move in the graph~$\cH_{n,k}[\cU_p]$ along lexicographically decreasing connectors, to the lexicographically least connectable vertex, which lies in the cycle $C(\hyph^{n-2k}(10)^k)$, i.e., it has only a single block and a single gap (all gliders are adjacent).
This proves that $\cH_{n,k}[\cU_p]$ is indeed connected, which completes the proof.
\end{proof}

We are now ready to present the proof of Theorem~\ref{thm:ham} for Schrijver graphs~$S(n,k)$.

\begin{proof}[Proof of Theorem~\ref{thm:ham} for Schrijver graphs~$S(n,k)=S(n,k,2)$]
By Lemma~\ref{lem:HU}, the set~$\cU_p\seq X_{n,k}$ of connectors defined in~\eqref{eq:Up} for any fixed value of~$p\in[n]$ has the property that the connectors in~$\cU_p$ are pairwise disjoint, and $\cH_{n,k}[\cU_p]$ is a connected graph.
Let~$\cT$ be a minimal subset of~$\cU_p$ such that~$\cH_{n,k}[\cT]$ is connected, i.e., this graph is a spanning tree.
As the connectors in~$\cU_p$ are pairwise disjoint, no two of the 4-cycles $C_4(x,y)$, $(x,y)\in\cT$, defined in Lemma~\ref{lem:connect} have an edge in common with one of the cycles of the factor~$\cC_{n,k}$ in~$S(n,k)$ defined in~\eqref{eq:Cnk}.
Furthermore, by the minimality of~$\cT$, no two of these 4-cycles have an edge in common that does not lie on one of the cycles of the factor (otherwise, $\cT$ would contain two edges connecting the same pair of nodes), so these 4-cycles are pairwise edge-disjoint.
Consequently, as $\cH_{n,k}[\cT]$ is connected, the symmetric difference of the edge set of the cycle factor~$\cC_{n,k}$ with the 4-cycles~$C_4(x,y)$, $(x,y)\in\cT$, is a Hamilton cycle in~$S(n,k)$.
\end{proof}

\section{Proof of Theorem~\ref{thm:algo} for Schrijver graphs~$S(n,k)$}
\label{sec:proof-algo}

To turn the proof presented in the previous section into an efficient algorithm, we first define an explicit spanning tree in the graph~$\cH_{n,k}[\cU_p]$.
To define such a spanning tree, we select exactly one connectable vertex~$x$ on each cycle of the factor~$\cC_{n,k}$, namely the one with lexicographically least value of~$P(x)$, i.e.,
\begin{equation}
\label{eq:Tp}
\cT_p:=\{(x,y)\in\cU_p \mid x \text{ has at least 2 blocks and } P(x)=L(\neck{x})\},
\end{equation}
where $P(x)$ and $L(\neck{x})$ are defined in~\eqref{eq:Px} and~\eqref{eq:Lxneck}, respectively; see Figure~\ref{fig:lextree}.

\begin{figure}
\includegraphics{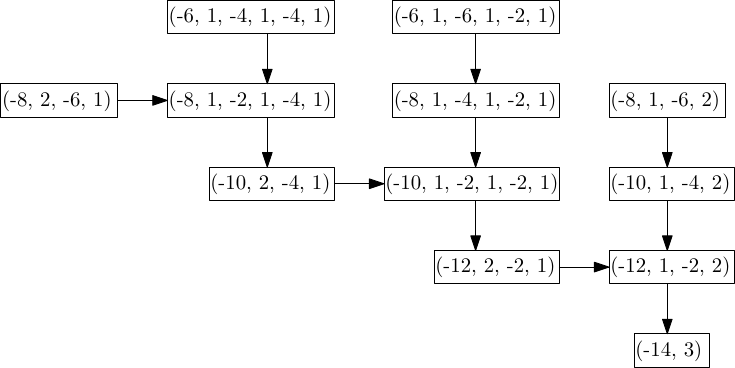}
\caption{Illustration of the tree~$\cT_p$ for $n=17$ and~$k=7$.
Each node of~$\cH_{n,k}[\cT_p]$, i.e., each cycle~$C(x)\in \cC_{n,k}$ is represented by the string $L(\neck{x})$, corresponding to the lexicographically least connectable vertex on that cycle.
The edges are oriented to point towards the lexicographically smaller string.
Horizontal and vertical edges correspond to the first and second case in~\eqref{eq:Ptau}, respectively.}
\label{fig:lextree}
\end{figure}

\begin{lemma}
\label{lem:HT}
For any~$k\geq 1$, $n\geq 2k+1$ and $p\in[n]$, let~$\cT_p\seq\cU_p\seq \cX_{n,k}$ be the set of connectors defined in~\eqref{eq:Tp}.
Then $\cH_{n,k}[\cT_p]$ is a spanning tree of~$\cH_{n,k}[\cU_p]$.
\end{lemma}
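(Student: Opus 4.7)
The plan is to orient each edge of $\cH_{n,k}[\cT_p]$ toward the endpoint with lexicographically smaller $L(\neck{\cdot})$-value, and to show that this makes $\cH_{n,k}[\cT_p]$ a rooted arborescence with a unique sink. Combined with the connectedness of $\cH_{n,k}[\cU_p]$ established in Lemma~\ref{lem:HU}, this will force $\cH_{n,k}[\cT_p]$ to be a spanning tree.

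To see that the orientation is well-defined and strict, I fix $(x,y)\in\cT_p$. The defining condition $P(x)=L(\neck{x})$, combined with the inequality $P(\tau(y))\prec P(x)$ proved inside Lemma~\ref{lem:HU} and the observation that $P(\tau(y))=L(y')$ for some $y'\in\neck{y}\cap Y_{n,k}$ (by the construction of $\tau$), gives
\[
L(\neck{y}) \;\preceq\; P(\tau(y)) \;\prec\; P(x) \;=\; L(\neck{x}).
\]
So every edge of $\cH_{n,k}[\cT_p]$ joins two distinct cycles whose labels are strictly comparable, and can be oriented from the larger to the smaller.

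Next I identify the sink and count outgoing edges. The cycle $R:=C(\hyph^{n-2k}(10)^k)$ is the unique element of~$\cC_{n,k}$ whose vertices have only a single block, and since $\cT_p$ contains only connectors whose first coordinate has at least two blocks, no edge of $\cH_{n,k}[\cT_p]$ leaves~$R$. For every non-root cycle~$C$, at least one connectable vertex at position~$p$ exists on~$C$ (rotate any representative so that a $10\hyph$ pattern sits at positions $p,p+1,p+2$), and the $P$-values taken by the various connectable vertices on~$C$ realize the distinct cyclic reorderings of the block-and-gap length sequence of~$C$; the lex-minimum among them is precisely $L(\neck{x})$, attained at a unique such vertex, giving exactly one outgoing edge from~$C$ in~$\cT_p$.

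Combining these facts, the oriented $\cH_{n,k}[\cT_p]$ has out-degree~$1$ at every non-root cycle, out-degree~$0$ at~$R$, and no directed cycle (since each edge strictly decreases the label), so it has exactly $|\cC_{n,k}|-1$ edges. Following out-edges from any cycle produces a strictly decreasing sequence of labels that must terminate at~$R$, so the underlying undirected graph is connected, hence a tree on $|\cC_{n,k}|$ vertices, hence a spanning tree of $\cH_{n,k}[\cU_p]$. The most delicate step is the uniqueness of the lex-minimizer on each non-root cycle; I expect this to reduce to checking that distinct rotations in $\neck{x}\cap Y_{n,k}$ yield distinct $L$-strings, which follows because the block-and-gap length sequence cannot have period smaller than what is forced by the minimal bitstring period.
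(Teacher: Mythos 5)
Your proof is correct and follows essentially the same route as the paper's: orient each edge of $\cH_{n,k}[\cT_p]$ toward the lexicographically smaller label, identify the single-block cycle $C(\hyph^{n-2k}(10)^k)$ as the unique sink of out-degree~$0$ while every other node has out-degree exactly~$1$, and deduce connectedness and acyclicity from the strict decrease $L(\neck{y})\preceq P(\tau(y))\prec P(x)=L(\neck{x})$ along edges. The uniqueness of the lex-minimizer that you flag does hold (indeed $P(v)$ together with the fixed position~$p$ determines the connectable vertex~$v$, so $P$ is injective on the connectable vertices of a cycle); the paper simply asserts the out-degree-$1$ property without elaboration.
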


\begin{proof}
We consider the edges~$(C(x),C(y))$ of~$\cH_{n,k}[\cT_p]$ for connectors~$(x,y)\in\cT_p$ as being \emph{oriented} from~$C(x)$ to~$C(y)$.

We first show that $\cH_{n,k}[\cT_p]$ is connected.
We have argued before that for $(x,y)\in\cT_p$ we have $P(\tau(y))\prec P(x)$ (recall \eqref{eq:Ptau}).
Let $z$ be the lexicographically least connectable vertex on the cycle~$C(\tau(y))$, i.e., we have $P(z)=P(\tau(y))$ or $P(z)\prec P(\tau(y))$.
Combining these inequalities shows that $P(z)\prec P(x)$.
This shows that we can move in the graph~$\cH_{n,k}[\cT_p]$ along lexicographically decreasing connectors to the lexicographically least connectable vertex, which lies in the cycle $C(\hyph^{n-2k}(10)^k)$ (corresponding to the bottom-right vertex in Figure~\ref{fig:lextree}).
This proves that $\cH_{n,k}[\cT_p]$ is indeed connected.

It remains to argue that~$\cH_{n,k}[\cT_p]$ is acyclic.
Firstly, there can be no cycles in which the orientations of all edges agree, because of the lexicographically decreasing connectors.
Secondly, if there was a cycle in which not all edges are oriented the same, then such a cycle would contain a node of out-degree~2, but in our graph~$\cH_{n,k}[\cT_p]$, every node has out-degree exactly~1 by definition, except the node $C(\hyph^{n-2k}(10)^k)$ which has out-degree~0 (as all vertices have only a single block).
This proves that~$\cH_{n,k}[\cT_p]$ is indeed acyclic.
\end{proof}

\vspace{2mm}
\textit{Proof of Theorem~\ref{thm:algo} for Schrijver graphs~$S(n,k)=S(n,k,2)$.}
Our algorithm computes the Hamilton cycle that corresponds to the spanning tree~$\cH_{n,k}[\cT_p]$ guaranteed by Lemma~\ref{lem:HT}.
It maintains the current vertex~$x\in X_{n,k}$, and a variable~$d\in\{-1,+1\}$ that controls the current direction of movement along the cycle.
Specifically, the next vertex along the cycle~$C(x)$ is obtained by setting
\begin{equation}
\label{eq:sigmax}
x\leftarrow \sigma^d(x)=\begin{cases} \sigma(x) & \text{if } d=+1, \\ \sigma^{-1}(x) & \text{if } d=-1, \end{cases}
\end{equation}
i.e., in the first case we perform a cyclic right shift (move forward along the cycle) and in the second case the inverse operation, namely a cyclic left shift (move backward along the cycle).
Clearly, these operations take linear time~$\cO(n)$.

\begin{wrapfigure}{r}{0.3\textwidth}
\vspace{-5mm}
\includegraphics{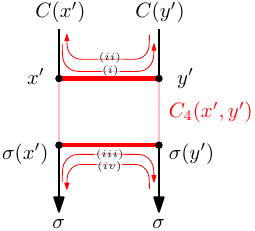}
\caption{Illustration of the instructions~(i)--(iv).}
\label{fig:algo}
\end{wrapfigure}
Furthermore, in each step, the algorithm has to check whether the current vertex~$x$ equals one of the four vertices on a cycle $C_4(x',y')=(x',\sigma(x'),\sigma(y'),y')$ for one of the connectors~$(x',y')\in\cT_p$ (recall Lemma~\ref{lem:connect}), and whether the next step according to~\eqref{eq:sigmax} would move along an edge of this cycle.
This is detected by the following conditions, followed by the corresponding alternative assignments instead of~\eqref{eq:sigmax}; see Figure~\ref{fig:algo}:
\begin{enumerate}[label=(\roman*),leftmargin=8mm]
\item if $x=x'$ and $d=+1$, then we set $x\leftarrow y'$, $d\leftarrow -1$;
\item if $x=y'$ and $d=+1$, then we set $x\leftarrow x'$, $d\leftarrow -1$;
\item if $x=\sigma(x')$ and $d=-1$, then we set $x\leftarrow \sigma(y')$, $d\leftarrow +1$;
\item if $x=\sigma(y')$ and $d=-1$, then we set $x\leftarrow \sigma(x')$, $d\leftarrow +1$.
\end{enumerate}
To decide whether $(x',y')\in\cT_p$, we need to check whether~$P(x')=M(\neck{x'})$ (recall \eqref{eq:Tp}), i.e., we need to compute the lexicographically smallest rotation of a string of length~$\leq n$, which can be done using Booth's $\cO(n)$ algorithm~\cite{MR585391}.
\qed

For more details, see the C++ implementation of our algorithm~\cite{cos_schrijver}.
Figure~\ref{fig:cycles} shows several examples of Hamilton cycles computed by our algorithm.

\begin{figure}
\begin{tabular}{ccccccc}
\raisebox{-\height}{\includegraphics{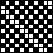}} &
\raisebox{-\height}{\includegraphics{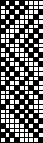}} &
\raisebox{-\height}{\includegraphics{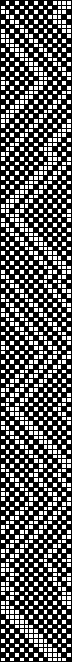}} &
\raisebox{-\height}{\includegraphics{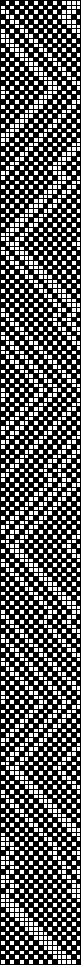}} &
\raisebox{-\height}{\includegraphics{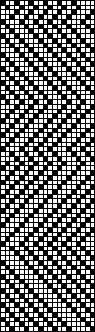}} &
\raisebox{-\height}{\includegraphics{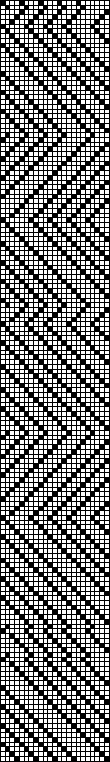}} &
\raisebox{-\height}{\includegraphics{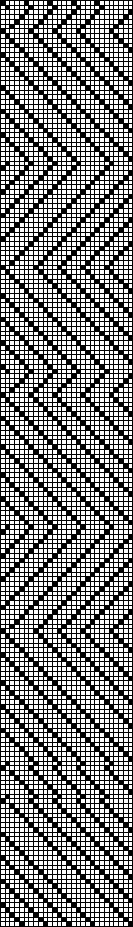}} \vspace{2mm} \\
$S(11,5)$ & $S(9,3)$ & $S(15,6)$ & $S(17,7)$ & $S(20,6,3)$ & $S(23,5,4)$ & $S(28,5,5)$
\end{tabular}
\caption{Hamilton cycles in different $s$-stable Kneser graphs computed by the algorithm from Section~\ref{sec:proof-algo}.}
\label{fig:cycles}
\end{figure}

\section{Generalizing the proofs to $s$-stable Kneser graphs~$S(n,k,s)$}
\label{sec:general}

The proofs of Theorems~\ref{thm:ham} and~\ref{thm:algo} presented in the previous sections can be generalized straightforwardly from the case $s=2$ (Schrijver graphs) to the more general case $s\geq 2$ ($s$-stable Kneser graphs), as follows.

We write $X_{n,k,s}$ for the vertices of~$S(n,k,s)$ in their bitstring representation.
It was shown in~\cite{MR2419223} that $|X_{n,k,s}|=\frac{n}{n-sk}\binom{n-sk}{k}$.
The construction of the cycle factor~$\cC_{n,k}$ given in Section~\ref{sec:factor} remains unchanged, but we slightly generalize the definition of matched bits described in Section~\ref{sec:matching}.
Each 1-bit in~$x\in X_{n,k,s}$ is matched to the next $s-1$ many 0s to the right of it, so $sk$ bits in total are matched ($k$ of them 1s, and $(s-1)k$ of them 0s) and the remaining $n-sk$ bits are unmatched.
Consequently a glider in~$x$ has the form $10^{s-1}$ and occupies $s$ positions $i,i+1,\ldots,i+s-1$ (cf.~\eqref{eq:Gammax}).
Furthermore, a connector has the form
\begin{equation*}
\begin{bmatrix}
x \\ \sigma^{-1}(y)
\end{bmatrix}
=
\begin{bmatrix}
\cdots1\,0^{s-1} \, \hyph \cdots\\
\cdots\hyph\, 1\,0^{s-1}\cdots
\end{bmatrix}
\end{equation*}
(cf.~\eqref{eq:conn}).
The decomposition~\eqref{eq:x} generalizes accordingly to
\begin{equation*}
x=\hyph^{\gamma_1}(10^{s-1})^{\beta_1/s}\;\hyph^{\gamma_2}(10^{s-1})^{\beta_2/s}\;\cdots \;\hyph^{\gamma_r}(10^{s-1})^{\beta_r/s},
\end{equation*}
and the remaining definitions in Section~\ref{sec:connectors} remain unchanged.
The definition~\eqref{eq:Up} generalizes to
\begin{equation*}
\cU_p := \big\{(x,y)\in \cX_{n,k}\mid x_{p,p+1,\ldots,p+s}=10^{s-1}\hyph \text{ and } \sigma^{-1}(y)_{p,p+1,\ldots,p+s}=\hyph 10^{s-1}\big\},
\end{equation*}
and then the proof of Lemma~\ref{lem:HU} goes through with minimal adjustments, specifically by generalizing~\eqref{eq:Px} and~\eqref{eq:tau} to
\begin{equation*}
P(x):=L(x,p+s-1+\gamma'+\beta')
\end{equation*}
and
\begin{equation*}
\tau(y):=\begin{cases}
\sigma^{-2}(y) & \text{if } \gamma'>1, \\
\sigma^{s-1}(y) & \text{if } \gamma'=1 \text{ and } \beta>s, \\
\sigma^{\gamma+s-1}(y) & \text{if } \gamma'=1 \text{ and } \beta=s,
\end{cases}
\end{equation*}
respectively.
The remainder of the proofs remains unchanged.

\bibliographystyle{alpha}
\bibliography{../refs}

\end{document}